\theoremstyle{plain}
\newtheorem{theorem}{Theorem}
\newtheorem{lemma}{Lemma}
\newtheorem*{theo*}{Theorem}
\newtheorem{proposition}{Proposition}
\newtheorem{corollary}{Corollary}
\theoremstyle{definition}
\newtheorem*{definition*}{Definition}
\def \K {\mathbb{K}}
\def \sl {\mathfrak{sl}}
\def \gl {\mathfrak{gl}}
\def \aff {\mathfrak{aff}}
\def \g {\mathfrak{g}}
\DeclareMathOperator{\Div}{div}
\begin{document}
\sloppy
\title[Module structure of the Lie algebra $W_n(K)$ over $sl_n(K)$]
{Module structure of the Lie algebra $W_n(K)$ over $sl_n(K)$}
\author
{Y.Chapovskyi, A.Petravchuk}
\address{Institute of Mathematics, National Academy of Sciences of Ukraine,
Tereschenkivska street, 3, 01004 Kyiv, Ukraine}
\email{safemacc@gmail.com}
\address{ Faculty of Mechanics and Mathematics,
Taras Shevchenko National University of Kyiv, 64, Volodymyrska street, 01033  Kyiv, Ukraine}
\email{ petravchuk@knu.ua, apetrav@gmail.com}

\date{\today}
\keywords{Lie algebra, irreducible module, derivation, polynomial ring }
\subjclass[2000]{17B65, 17B66, 17B10}

\begin{abstract}
Let $\mathbb K$ be an algebraically closed field of characteristic zero, 
$A = \mathbb K[x_1,\dots,x_n]$ the polynomial ring,
and let $W_n(\mathbb K)$ denote the Lie algebra of all $\mathbb K$-derivations on $A$.
The Lie algebra $W_n := W_n(\mathbb K)$ admits a natural grading $W_n = \bigoplus_{i \ge -1} W^{[i]}_n$,
where $W^{[i]}_n$ consists of all homogeneous derivations whose coefficients are homogeneous polynomials of degree $i+1$ or zero. The component $W^{[0]}_n$
is a subalgebra of $W_n$ and is isomorphic to  $\mathfrak{gl}_n(\mathbb K).$ Moreover, each $W_n^{[i]}$ for $i \ge -1$ is a finite-dimensional module over $W_n^{[0]}$.
We prove that $W^{[i]}_n,\; i \ge 0$ is a sum of two irreducible submodules $W^{[i]}_n = M_i \oplus N_i$,
where $M_i$ consists of all  divergence-free derivations, and $N_i$ consists of derivations that are polynomial  multiples of the
Euler derivation $E_n = \sum_{i=1}^n x_i \frac{\partial}{\partial x_i}$.
As a consequence, we show that the standard grading is exact in certain sense, namely: 
$[W^{[i]}_n, W^{[j]}_n] = W^{[i+j]}_n$ for all $i,j,$ except when $i = j = 0$. 
We also address the question of when the subalgebra of $W_n$ generated by $W_n^{[-1]} \oplus W_n^{[0]},$ together with an additional element from $W_n,$ equals the entire Lie algebra $W_n$.

 \end{abstract}
\maketitle


\section{Introduction}

Let $\K$ be an algebraically closed field of characteristic zero, 
$A = \K[x_1,\dots,x_n]$ the polynomial ring,
and let $W_n := W_n(\K)$ be the Lie algebra of all the $\K$-derivations on $A$.
From the viewpoint of geometry, $W_n$ consists of all the polynomial vector fields on $\K^n$.
The Lie algebra $W_n$ was studied by many authors from different  viewpoints: maximal subalgebras of $W_n$  were  studied in \cite{Rudak1, Bavula, Bell}, solvable and nilpotent subalgebras in \cite{MP},   automorphisms and derivations of $ \K[x_1,\dots,x_n]$  and related structures   in \cite{Bavula, Bezushchak, Rudak2}. 
The Lie  algebra $W_n$  has a natural grading: $W_n = \bigoplus_{j \ge -1} W^{[j]}_n$,
where 
$$W^{[j]}_n = \{D \in W_n \mid D = 
\sum_{i=1}^n f_i \frac{\partial}{\partial x_i}, 
f_i\in \mathbb K[x_1, \ldots , x_n]\}$$ 
and  $f_i, i=1, \ldots , n$ are either homogeneous polynomials  of degree ${j+1}$ or zero.
If $D\in W^{[j]}_n$ then we say that $D$ is homogeneous of degree $\deg D=j. $
The $\mathbb K$-subspace  $W^{[0]}_n$ consists of all the linear homogeneous derivations and $W^{[0]}_n \simeq \gl_n(\K)$.
This is a subalgebra of $W_n$ and $W^{[0]}_n$ contains the subalgebra consisting of derivations  with zero divergence that is isomorphic to the simple Lie algebra  $\sl_n(\K)$. 
All the homogeneous components $W^{[i]}_n$ are modules over $W^{[0]}_n$ (and over $\sl_n(\K)$).
The main result of the paper is Theorem \ref{Th1}: every $W^{[0]}_n$-module $W^{[i]}_n, i\geq 0$ is a direct sum
$W^{[i]}_n = M_i \oplus N_i$ of two irreducible submodules $M_i$ and $N_i$.
The submodule $M_i$ consists of all the divergence-free derivations and $N_i$ consists  of all derivations that are  polynomial multiple of
the Euler derivation $E_n = \sum_{i=1}^n x_i \frac{\partial}{\partial x_i}$.
As a consequence of Theorem \ref{Th1}, we obtain the equalities $[W^{[i]}_n, W^{[j]}_n] = W^{[i+j]}_n,$
except when $i = j = 0$ (for the case $ i=j=-1$ this equality also holds if we put $W_n^{[-2]}=0 $).
As an application of Theorem \ref{Th1} we point out a criterion under which an element of $W_n$ generates together with
the (finite dimensional) subalgebra $W^{[-1]}_n \oplus W^{[0]}_n $ the entire Lie algebra $W_n$ (Theorem \ref{Th2}).

Notations in the paper are standard.
The subalgebra $W^{[0]}_n$ consists of all derivations 
$D = \sum_{i,j=1}^n a_{ij} x_i \frac{\partial}{\partial x_j}$
and is isomorphic to the general linear algebra $\gl_n(\K)$ via the map $D\to (a_{ij}).$  
Divergence-free derivations from $W^{[0]}_n$ form a subalgebra, 
that is isomorphic to the simple Lie algebra $\sl_n(\K)$.
Without loss of generality, one can assume that $\gl_n(\K)$ and $\sl_n(\K)$ are embedded in $W_n$.
For any $D \in W_n,$ $ D=\sum_{i=1}^n f_i \frac{\partial}{\partial x_i}, $ the divergence $\Div D$ is the sum 
$\Div D = \sum_{i=1}^n \frac{\partial f_i}{\partial x_i}$.

\section{The main result}
The next two statements can be checked immediately (see also \cite{Now}).

\begin{lemma} \label{lm:l1}
	Let $D_1, D_2 \in W_n, \quad f \in A = \K[x_1,\dots,x_n]$.
	Then the equalities hold:
	\begin{enumerate}
		\item $\Div(D_1 \pm D_2) = \Div D_1 \pm \Div D_2$;
		\item $\Div(f D_1) = f \Div D_1 + D_1(f)$;
		\item $\Div([D_1, D_2]) = D_1(\Div D_2) - D_2(\Div D_1)$.
	\end{enumerate}
\end{lemma}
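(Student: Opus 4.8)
Write each derivation in coordinates, $D_1 = \sum_{i=1}^n f_i \frac{\partial}{\partial x_i}$ and $D_2 = \sum_{i=1}^n g_i \frac{\partial}{\partial x_i}$ with $f_i, g_i \in A$. The plan is to reduce all three identities to elementary properties of partial differentiation on the polynomial ring $A$: linearity of each $\frac{\partial}{\partial x_i}$, the Leibniz product rule, and the commutativity of mixed partial derivatives $\frac{\partial^2}{\partial x_i \partial x_j} = \frac{\partial^2}{\partial x_j \partial x_i}$ (valid for polynomials). Parts (1) and (2) are then immediate, and I would use them as stepping stones for part (3).

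For part (1), note $D_1 \pm D_2 = \sum_i (f_i \pm g_i)\frac{\partial}{\partial x_i}$, so $\Div(D_1 \pm D_2) = \sum_i \frac{\partial}{\partial x_i}(f_i \pm g_i)$, and linearity of each $\frac{\partial}{\partial x_i}$ splits this into $\Div D_1 \pm \Div D_2$. For part (2), since $fD_1 = \sum_i (f f_i)\frac{\partial}{\partial x_i}$, the Leibniz rule gives $\frac{\partial}{\partial x_i}(f f_i) = f \frac{\partial f_i}{\partial x_i} + f_i \frac{\partial f}{\partial x_i}$; summing over $i$ yields $f \Div D_1 + \sum_i f_i \frac{\partial f}{\partial x_i}$, and the last sum is exactly $D_1(f)$ by definition of the action of $D_1$ on $f$.

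For part (3), which is the only step requiring care, I would first use bilinearity of the bracket together with part (1) to reduce to the case of monomial derivations $D_1 = f \frac{\partial}{\partial x_i}$ and $D_2 = g \frac{\partial}{\partial x_j}$. Here the commutator is computed directly as $[D_1, D_2] = f \frac{\partial g}{\partial x_i}\frac{\partial}{\partial x_j} - g\frac{\partial f}{\partial x_j}\frac{\partial}{\partial x_i}$. Applying part (2) to each of the two resulting terms (noting $\Div \frac{\partial}{\partial x_j} = 0$) and expanding by Leibniz, one obtains $\Div[D_1,D_2] = f \frac{\partial^2 g}{\partial x_j \partial x_i} - g\frac{\partial^2 f}{\partial x_i \partial x_j}$, where the two first-order cross terms cancel. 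On the other hand, the right-hand side $D_1(\Div D_2) - D_2(\Div D_1)$ equals $f\frac{\partial^2 g}{\partial x_i \partial x_j} - g\frac{\partial^2 f}{\partial x_j \partial x_i}$.

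The main obstacle, really the only substantive point, is matching these two expressions: this is exactly where commutativity of the mixed partial derivatives is invoked, giving $\frac{\partial^2 g}{\partial x_j \partial x_i} = \frac{\partial^2 g}{\partial x_i \partial x_j}$ and likewise for $f$. Returning to general $D_1, D_2$ by bilinearity then completes the proof. Alternatively, one can avoid the reduction and compute directly from the coordinate formula $[D_1,D_2] = \sum_k \bigl(D_1(g_k) - D_2(f_k)\bigr)\frac{\partial}{\partial x_k}$; the second-order terms reassemble into $D_1(\Div D_2) - D_2(\Div D_1)$, while the first-order terms cancel after relabelling the summation indices, again using equality of mixed partials.
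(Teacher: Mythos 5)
Your proof is correct: parts (1) and (2) are the straightforward coordinate computations, and in part (3) both the reduction by bilinearity to $D_1=f\frac{\partial}{\partial x_i}$, $D_2=g\frac{\partial}{\partial x_j}$ and the final matching via equality of mixed partials are valid. The paper offers no proof at all for this lemma (it says the statements ``can be checked immediately,'' citing Nowicki), and your argument is exactly the routine coordinate verification being alluded to, so the approaches agree.
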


\begin{lemma} \label{lm:l2}
	Let $E_n = \sum_{i=1}^n x_i \frac{\partial}{\partial x_i}$ be the Euler derivation,
	$D \in W^{[i]}_n$ and let $f \in A$ be a homogeneous polynomial of degree $m$.
	Then
	\begin{enumerate}
		\item $E_n(f) = mf;$
		\item $ [E_n, D] = iD,$
		in particular, for $D \in W^{[0]}_n$ it holds $[E_n, D] = 0;$
		\item $\Div(f E_n) = (m+n)f.$
	\end{enumerate}
\end{lemma}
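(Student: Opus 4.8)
The plan is to verify the three identities in turn, each by a direct computation on monomials or by invoking Lemma \ref{lm:l1}. All three assertions are elementary, so the work is purely computational; the only place that demands any care is the bracket in part (2).

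For part (1) I would first check the claim on a single monomial $x_1^{a_1}\cdots x_n^{a_n}$ of total degree $m = a_1 + \cdots + a_n$. Applying $E_n$ term by term, each summand $x_k\,\partial_{x_k}$ reproduces the monomial with coefficient $a_k$, so the sum is $(a_1 + \cdots + a_n) = m$ times the monomial. Since every homogeneous polynomial of degree $m$ is a $\K$-linear combination of such monomials and $E_n$ acts $\K$-linearly, the identity $E_n(f) = mf$ follows for all homogeneous $f$ of degree $m$.

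For part (2), write $D = \sum_{j=1}^n g_j\,\partial_{x_j}$ with each $g_j$ homogeneous of degree $i+1$ (or zero). I would expand $[E_n, D] = E_n D - D E_n$ by the standard bracket formula for derivations. The term in which $E_n$ differentiates the coefficients of $D$ contributes $\sum_j E_n(g_j)\,\partial_{x_j}$, which by part (1) equals $(i+1)D$ since each $g_j$ has degree $i+1$. The term in which $D$ differentiates the coefficients $x_k$ of $E_n$ contributes $\sum_k g_k\,\partial_{x_k} = D$, because $\partial x_k/\partial x_j = \delta_{kj}$. Subtracting gives $[E_n, D] = (i+1)D - D = iD$, and specializing to $i=0$ yields $[E_n, D] = 0$ for $D \in W^{[0]}_n$. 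This index-and-sign bookkeeping inside the bracket is the main (and essentially only) obstacle; once the two contributions are correctly identified the conclusion is immediate.

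For part (3) I would avoid a direct coordinate computation and instead invoke Lemma \ref{lm:l1}(2) with $D_1 = E_n$, which gives $\Div(f E_n) = f\,\Div E_n + E_n(f)$. Here $\Div E_n = \sum_{i=1}^n \partial x_i/\partial x_i = n$, and $E_n(f) = mf$ by part (1), so $\Div(f E_n) = nf + mf = (m+n)f$, as claimed.
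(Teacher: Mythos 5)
Your proof is correct: all three computations check out, and the use of Lemma \ref{lm:l1}(2) for part (3) is exactly the intended shortcut. The paper itself gives no proof (it states that the lemma ``can be checked immediately''), and your verification is precisely the routine direct computation the authors had in mind.
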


In order to prove the main result of the paper, Theorem \ref{Th1}, we need some results from representation theory of semisimple Lie algebras.
Let $\g$ be a semisimple Lie algebra over an algebraically closed  field $\mathbb K$ 
 of characteristic zero. 
Let $H$ be a Cartan subalgebra of $\g$, let 
$\varPhi$ be its root system and 
$\Delta$  the basis of $\varPhi$.
Denote by $B(\Delta) = H \oplus \bigoplus_{\alpha \in \varPhi , \alpha \succ 0} \g_{\alpha}$ the Borel subalgebra
of the Lie algebra $\g$. In the following lemma, we gather some eesential facts  
 about finite-dimensional modules over the Lie algebra $\g$.

\begin{lemma}[see, for example \cite{Humphreys}, Ch.VI] \label{lm:l3}
	Let $V$ be a finite-dimensional module over a semisimple Lie algebra $\g$.
	Then:
	\begin{enumerate}
		\item The module $V$ contains at least one maximal eigenvector of the highest weight of $V$, i.e.,
		 a vector $v=v_{\lambda}$  such that 
		$x_{\alpha} v = 0$ for all $x_{\alpha} \in \g_{\alpha},\; \alpha \in \varPhi, \; \alpha \succ 0$ and $hv = \lambda(h)v$ for all $h \in H$.
		\item If the module $V$ contains a unique maximal vector 
		(up to nonzero scalar multiples) then $V$ is an irreducible module.
			\item If $U, V$ are irreducible non-isomorphic $\g$-modules, then the module $U\oplus V$ has only three proper submodules:   $U, V$ and $0$.
		
	\end{enumerate}
\end{lemma}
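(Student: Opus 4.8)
The plan is to prove the three assertions with the standard toolkit of finite-dimensional representation theory of semisimple Lie algebras: the weight-space decomposition relative to $H$, Lie's theorem applied to the solvable Borel $B(\Delta)$, Weyl's complete reducibility theorem, and Schur's lemma. None of the three parts is hard individually; the value of the statement is that it packages exactly what the proof of Theorem \ref{Th1} will need, so I would aim for a short, self-contained argument for each item.

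For part (1) I would begin from the decomposition $V = \bigoplus_{\mu} V_{\mu}$ into $H$-weight spaces, which has only finitely many nonzero summands since $\dim V < \infty$. Ordering the weights by the partial order induced by $\Delta$, I would choose a weight $\lambda$ that is maximal (for definiteness, one of largest height). Then for any nonzero $v \in V_{\lambda}$ and any positive root $\alpha$ one has $x_{\alpha} v \in V_{\lambda + \alpha}$; maximality forces $V_{\lambda + \alpha} = 0$, so $x_{\alpha} v = 0$ for all $x_{\alpha} \in \g_{\alpha}$, $\alpha \succ 0$, while $hv = \lambda(h)v$ holds by construction. Thus $v$ is a maximal vector of weight $\lambda$. (Equivalently, Lie's theorem furnishes a common eigenvector for the solvable algebra $B(\Delta)$, and such a vector is automatically annihilated by the positive root spaces.)

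For part (2) I would argue by contraposition using Weyl's theorem. If $V$ were not irreducible, complete reducibility would give $V = W_1 \oplus W_2$ with both $W_i$ nonzero $\g$-submodules. By part (1) each $W_i$ contains a maximal vector $v_i$, and since $W_1 \cap W_2 = 0$ the vectors $v_1, v_2$ are linearly independent, contradicting the hypothesis that the maximal vector is unique up to scalars. Hence $V$ is irreducible. For part (3), let $W \subseteq U \oplus V$ be a submodule with projections $p_U, p_V$. Each image $p_U(W), p_V(W)$ and each intersection $W \cap U, W \cap V$ is a submodule of an irreducible module, hence $0$ or everything. If $p_U(W) = 0$ then $W \subseteq V$, so $W \in \{0, V\}$; symmetrically for $p_V(W) = 0$. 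The only remaining case is that both projections are surjective and $W$ is proper: if then $W \cap U \neq 0$ one gets $U \subseteq W$, and surjectivity of $p_V$ forces $V \subseteq W$ as well, so $W = U \oplus V$, contradicting properness; the same applies to $W \cap V$. Thus $W \cap U = W \cap V = 0$, and Goursat's lemma yields a $\g$-isomorphism $U \cong U/(W \cap U) \cong V/(W \cap V) \cong V$, contradicting $U \not\cong V$ by Schur's lemma. Ruling this case out leaves exactly $W \in \{0, U, V, U \oplus V\}$, i.e. the three proper submodules $U$, $V$, $0$.

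The main obstacle is the last case of part (3): one must use the hypothesis $U \not\cong V$ in an essential way to exclude the "diagonal" submodules $W$ on which both projections are isomorphisms, and it is here that Schur's lemma (through Goursat's correspondence) does the decisive work. The earlier reductions are routine, but I would be careful to record explicitly that surjectivity of one projection together with a nontrivial intersection with the other summand forces $W$ to be the whole module, since this is what collapses all the intermediate possibilities.
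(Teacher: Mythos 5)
Your proof is correct; note that the paper itself gives no proof of this lemma at all, simply citing it as standard material from Humphreys, Ch.~VI. Your arguments (weight-space maximality for part~(1), Weyl's complete reducibility for part~(2), and the projection/Goursat argument using $U \not\cong V$ for part~(3)) are precisely the standard textbook treatment that the citation points to, so there is nothing to reconcile -- though in part~(3) the appeal to Schur's lemma is superfluous, since the isomorphism $U \cong W \cong V$ already contradicts the hypothesis directly.
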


\begin{lemma} \label{lm:l4}
	Let $D \in W_n^{[m]}$, $m \ge 0$ be a homogeneous (of degree $m$) derivation such that 
	$\left[x_{\alpha} \frac{\partial}{\partial x_{\beta}}, D \right] = 0$ for all $1 \le \alpha < \beta \le n$.
	Then $D$ is of the form
	$$
	D = c^1 x_1^{m+1} \frac{\partial}{\partial x_1} + 
	\sum_{i=2}^n \left(\sum_{k=0}^{m+1} c_k^i x_1^{m-k+1} x_i^k \right) \frac{\partial}{\partial x_i}
	$$
	for some $c^1,\, c_k^i \in \K$, $2 \le i \le n$, $0 \le k \le m+1$.
\end{lemma}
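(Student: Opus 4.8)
The plan is to prove the statement by a direct computation of the bracket, reading off the constraints on the coefficients $f_i$ coordinate by coordinate. Write $D = \sum_{i=1}^n f_i \frac{\partial}{\partial x_i}$ with each $f_i$ homogeneous of degree $m+1$ (or zero). The first step is to recall the formula for the Lie bracket of two monomial derivations on $A$,
$$
\left[ g \frac{\partial}{\partial x_a},\, h \frac{\partial}{\partial x_b} \right]
= g \frac{\partial h}{\partial x_a} \frac{\partial}{\partial x_b}
- h \frac{\partial g}{\partial x_b} \frac{\partial}{\partial x_a},
$$
in which the second-order terms cancel. Applying this with $g = x_\alpha$, $a = \beta$ and summing over the terms $f_i \frac{\partial}{\partial x_i}$ of $D$, and using $\frac{\partial x_\alpha}{\partial x_i} = \delta_{i\alpha}$, I expect to obtain
$$
\left[ x_\alpha \frac{\partial}{\partial x_\beta},\, D \right]
= \sum_{i=1}^n x_\alpha \frac{\partial f_i}{\partial x_\beta} \frac{\partial}{\partial x_i}
- f_\alpha \frac{\partial}{\partial x_\beta}.
$$

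Next I would compare the coefficient of each $\frac{\partial}{\partial x_j}$ in this expression with zero. This splits the hypothesis into two families of equations: the off-diagonal relations, for $j \neq \beta$, namely $x_\alpha \frac{\partial f_j}{\partial x_\beta} = 0$, which (since $x_\alpha \neq 0$ in $A$) force $\frac{\partial f_j}{\partial x_\beta} = 0$; and the diagonal relation, for $j = \beta$, namely $f_\alpha = x_\alpha \frac{\partial f_\beta}{\partial x_\beta}$. Only the off-diagonal relations are needed to pin down the shape of $D$. Taking the single value $\alpha = 1$, I get for every $\beta \ge 2$ that $\frac{\partial f_j}{\partial x_\beta} = 0$ for all $j \neq \beta$.

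From here the conclusion is a short bookkeeping argument. The relations say that $f_1$ is annihilated by $\frac{\partial}{\partial x_\beta}$ for every $\beta \ge 2$, so $f_1$ depends on $x_1$ alone; homogeneity of degree $m+1$ then gives $f_1 = c^1 x_1^{m+1}$. Similarly, for $i \ge 2$ the coefficient $f_i$ is annihilated by every $\frac{\partial}{\partial x_\beta}$ with $\beta \ge 2$ and $\beta \neq i$, hence $f_i$ is a function of $x_1$ and $x_i$ only; homogeneity forces $f_i = \sum_{k=0}^{m+1} c_k^i x_1^{m-k+1} x_i^k$. This is exactly the asserted form.

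The calculation itself is routine, so the only places requiring care are the index bookkeeping in the bracket (making sure the term $f_\alpha \frac{\partial}{\partial x_\beta}$ is attributed to the correct coordinate) and the observation that $\beta = 1$ yields no constraint at all, since there is no $\alpha < 1$; this is precisely why the $x_1$-dependence of every $f_i$ survives and is not eliminated. I would note that the diagonal relations $f_\alpha = x_\alpha \frac{\partial f_\beta}{\partial x_\beta}$ are automatically consistent with this form and merely impose additional linear relations among the constants $c_k^i$, which are irrelevant to the stated shape of $D$.
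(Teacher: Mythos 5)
Your proof is correct and takes essentially the same approach as the paper: both expand $\left[x_\alpha \frac{\partial}{\partial x_\beta}, D\right]$ via the same identity and use the vanishing of the coefficients of $\frac{\partial}{\partial x_i}$ for $i \neq \beta$ (with $\alpha = 1$, or any $\alpha < \beta$) to conclude that $f_1$ depends only on $x_1$ and each $f_i$, $i \ge 2$, only on $x_1, x_i$. The difference is purely notational: you phrase the off-diagonal constraint as $\frac{\partial f_i}{\partial x_\beta} = 0$, whereas the paper states the equivalent condition that the multi-index coefficients $a_J^i$ vanish whenever $j_\beta \neq 0$ and $i \neq \beta$.
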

\begin{proof}
	Let us write  the derivation $D$ in the coordinate form:
	\begin{equation} \label{eq:e1}
		D = \sum_{i=1}^n \left( \sum_{|J| = m+1} a_J^i \ x_1^{j_1} \dots x_n^{j_n} \right) \frac{\partial}{\partial x_i},
	\end{equation}
	where $J = (j_1,\dots,j_n)$, $j_s \ge 0$ and $|J| = j_1 + \dots + j_n$.
	Note that the next equality holds:
	\begin{equation} \label{eq:e2}
		\left[ x_{\alpha} \frac{\partial}{\partial x_{\beta}}, D\right] = 
		x_{\alpha} \left[\frac{\partial}{\partial x_{\beta}}, D\right] -
		D(x_{\alpha}) \frac{\partial}{\partial x_{\beta}}.
	\end{equation}
	Then we have  
	$$
	\left[\frac{\partial}{\partial x_{\beta}}, D\right] = \sum_{i=1}^n 
	\left(\sum_{|J| = m+1} j_{\beta} a_J^i \ x_1^{j_1} \dots x_{\beta}^{j_{\beta}-1} \dots x_n^{j_n}\right)
	\frac{\partial}{\partial x_i},
	$$
	$$
	D(x_{\alpha}) = \sum_{|J| = m+1} a_J^{\alpha} \ x_1^{j_1} \dots x_n^{j_n}.
	$$
	Replacing in \eqref{eq:e2} $\left[\frac{\partial}{\partial x_{\beta}}, D\right]$ and $D(x_{\alpha})$
	by the above obtained expressions we get
		\begin{equation}\label{three}
	\left[ x_{\alpha} \frac{\partial}{\partial x_{\beta}}, D\right] = \sum_{i=1}^n \left(\sum_{|J| = m+1} j_{\beta} a_J^i \ x_{\alpha} x_1^{j_1} \dots x_{\beta}^{j_{\beta}-1} \dots x_n^{j_n}\right)
	\frac{\partial}{\partial x_i} - 
		\end{equation}
	$$	
	-\left(\sum_{|J| = m+1} a_J^{\alpha} \ x_1^{j_1} \dots x_n^{j_n}\right)
	\frac{\partial}{\partial x_{\beta}} = 0
	$$
	for all $\alpha$, $\beta$, $1 \le \alpha < \beta \le n$.
	Rewrite this equality  in the form
	$$\left[ x_{\alpha} \frac{\partial}{\partial x_{\beta}}, D\right] = 
	\sum_{i=1}^n f_i \frac{\partial}{\partial x_i},  \ f_i \in \K[x_1,\dots,x_n],$$
	where 
	$$f_i = \sum_{|J| = m+1} 
	j_{\beta} a_J^i \ x_{\alpha} x_1^{j_1} \dots x_{\beta}^{j_{\beta}-1} \dots x_n^{j_n}$$
	for $i \ne \beta$ and $$f_{\beta} =\sum_{|J| = m+1} j_{\beta} a_J^i \ x_{\alpha} x_1^{j_1} \dots x_{\beta}^{j_{\beta}-1} \dots x_n^{j_n} - \sum_{|J| = m+1} a_J^{\alpha} \ x_1^{j_1} \dots x_n^{j_n}.$$
	If $i \ne \beta$ then \eqref{three} implies the equality $f_i = 0$
	because $\frac{\partial}{\partial x_i} \ne \frac{\partial}{\partial x_{\beta}}$.
	If, in addition, $j_{\beta} \ne 0$ then $a_J^i = 0$ in the expression for $f_i$.
	Thus, for $i \ne \beta$ and $j_{\beta} \ne 0$ we have $a_J^i = 0$.
	Note that \eqref{three} is a collection of $\frac{n(n-1)}{2}$ equalities for
	$\alpha, \beta \in \{1,2,\dots,n\}$, $\alpha < \beta$.
	
	First, consider the coefficient
	\begin{equation} \label{eq:e4}
		f_1 = \sum_{|J| = m+1} j_{\beta} a_J^1 \ x_{\alpha} x_1^{j_1} \dots x_{\beta}^{j_{\beta}-1} \dots x_n^{j_n}
	\end{equation}
	at $\frac{\partial}{\partial x_1}$ in \eqref{three}. Here $i=1$ and $\beta \ge 2,$ so  we have
	$\beta \ne i$. If, in addition, $j_{\beta} \ne 0$ for some $\beta \ge 2$ then by the mentioned above
	$a_J^1 = 0$. Therefore  the polynomial $\sum_{|J| = m+1} a_J^1 \ x_1^{j_1} \dots x_n^{j_n}$
	contains only summands with $j_{\beta} = 0$, $\beta \ge 2$. The latter means that
	$$\sum_{|J| = m+1} a_J^1 \ x_1^{j_1} \dots x_n^{j_n} = c^1 x_1^{m+1}$$ for some $c^1 \in \K$ and
	\begin{equation} \label{eq:e5}
		D = c^1 x_1^{m+1} \frac{\partial}{\partial x_1} + \sum_{i=2}^n
		\left( \sum_{|J| = m+1} a_J^i \ x_1^{j_1} \dots x_n^{j_n} \right) \frac{\partial}{\partial x_i}.
	\end{equation}
	Let us find the coefficient 
	$$f_s = \sum_{|J| = m+1} a_J^s \ x_1^{j_1} \dots x_n^{j_n}$$
	at $\frac{\partial}{\partial x_s}$ in \eqref{eq:e5} for $2 \le s \le n$.
	Let us show that $f_s$ contains only monomials of $x_1$, $x_s$.
	Indeed, let it  not be the case. Then there exists a nonzero monomial
	$a_J^s x_1^{j_1} \dots x_n^{j_n}$ such that $j_{\beta} \ne 0$ for $\beta \ne s$.
	Then, by the above mentioned, $a_J^s = 0$, where $a_J^s$ is the coefficient at  the monomial
	from $f_s$ containing $x_{\beta}^{j_{\beta}}$ with $j_{\beta} \ne 0$.
	Therefore, $f_s$ contains only monomials of $x_1$, $x_s,$ i.e., 
	$f_s = \sum_{k=0}^{m+1} c_k^s x_1^{m-k+1} x_s^k$.
	So, we have proved that $D$ is of the required  form.
	The proof is complete.
\end{proof}

\begin{theorem}\label{Th1}
	Let the Lie algebra $W_n(\K) = W_n, n\geq 2$ be written as a direct sum of homogeneous 
	components of the standard grading
	\begin{equation} \label{eq:e6}
		W_n = W_n^{[-1]} \oplus W_n^{[0]} \oplus \dots \oplus W_n^{[m]} \oplus \dots.
	\end{equation}
	Then $L = W_n^{[0]}$ is a subalgebra of \  $W_n$, $L \simeq \gl_n(\K)$
	and every summand of the sum \eqref{eq:e6} is a finite dimensional module over $L$.
	 Every $L$-module $W_n^{[m]}, m \ge 0$ is a direct sum $W_n^{[m]} = M_m \oplus N_m$
	of two irreducible submodules, where $M_m$ consists of divergence-free derivations and $N_m$
	consists of all the derivations from $W_n^{[m]}$ that are polynomial multiple of the Euler derivation $E_n$.
\end{theorem}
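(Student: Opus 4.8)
The plan is to establish the direct-sum decomposition first and then prove irreducibility of each summand via highest-weight theory (Lemma \ref{lm:l3}) together with the structural Lemma \ref{lm:l4}. First I would record that $W_n^{[m]}$ is finite-dimensional, since its coefficients range over the finite-dimensional space of homogeneous polynomials of degree $m+1$, and that it is an $L$-module, hence an $\sl_n(\K)$-module. Next I would check that $M_m$ and $N_m$ are genuinely $L$-submodules. For $M_m$ this follows from Lemma \ref{lm:l1}(3): if $g \in W_n^{[0]}$ and $\Div D = 0$, then $\Div[g,D] = g(\Div D) - D(\Div g) = 0$, because $\Div g$ is a constant (the trace) and $\Div D = 0$. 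For $N_m$ I would use the derivation identity $[g, fE_n] = g(f)E_n + f[g,E_n]$ together with Lemma \ref{lm:l2}(2), which gives $[g,E_n]=0$; hence $[g,fE_n] = g(f)E_n \in N_m$.

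Then I establish that the sum is direct. For $D \in W_n^{[m]}$ set $f = \frac{1}{m+n}\Div D$, a homogeneous polynomial of degree $m$ (note $m+n \ge 2 \ne 0$). By Lemma \ref{lm:l2}(3), $\Div(fE_n) = (m+n)f = \Div D$, so $D - fE_n \in M_m$ and $D = (D - fE_n) + fE_n \in M_m + N_m$. The intersection is trivial, since $fE_n \in M_m$ forces $(m+n)f = 0$, hence $f = 0$ in characteristic zero. This yields $W_n^{[m]} = M_m \oplus N_m$.

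The irreducibility is the substantive part. By Lemma \ref{lm:l3}(2) it suffices to show that each of $M_m$ and $N_m$ contains a unique maximal ($\sl_n$-highest-weight) vector up to scalars; equivalently, since the decomposition is a direct sum of submodules, that the full space of maximal vectors of $W_n^{[m]}$ is two-dimensional, with one basis vector in each summand. I would exhibit two explicit maximal vectors, $x_1^m E_n \in N_m$ and $x_1^{m+1}\frac{\partial}{\partial x_n} \in M_m$ (the latter is divergence-free since $\frac{\partial}{\partial x_n}(x_1^{m+1}) = 0$), and check directly that each is annihilated by every raising operator $x_\alpha \frac{\partial}{\partial x_\beta}$, $\alpha < \beta$. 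They carry distinct weights $m\varepsilon_1$ and $(m+1)\varepsilon_1 - \varepsilon_n$, so they are linearly independent and lie in different summands.

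The hard part will be ruling out further maximal vectors. Here Lemma \ref{lm:l4} reduces any maximal vector to the explicit form $D = c^1 x_1^{m+1}\frac{\partial}{\partial x_1} + \sum_{i=2}^n\bigl(\sum_{k=0}^{m+1} c_k^i x_1^{m-k+1}x_i^k\bigr)\frac{\partial}{\partial x_i}$; I would then impose the bracket conditions not yet exploited in that lemma, namely vanishing of the coefficient of $\frac{\partial}{\partial x_\beta}$ in $[x_\alpha\frac{\partial}{\partial x_\beta}, D]$. Writing $[x_\alpha\frac{\partial}{\partial x_\beta}, D] = \sum_i x_\alpha\frac{\partial}{\partial x_\beta}(F_i)\frac{\partial}{\partial x_i} - F_\alpha\frac{\partial}{\partial x_\beta}$ with $F_i$ the coefficients of $D$, the surviving equations read $x_\alpha\frac{\partial}{\partial x_\beta}(F_\beta) = F_\alpha$ for all $\alpha < \beta$. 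Taking $\alpha = 1$ forces $c_1^i = c^1$ and $c_k^i = 0$ for $k \ge 2$; taking $2 \le \alpha < \beta$ forces $c_0^i = 0$ for $2 \le i \le n-1$, leaving only $c^1$ and $c_0^n$ free. This collapses $D$ to $c^1 x_1^m E_n + c_0^n x_1^{m+1}\frac{\partial}{\partial x_n}$, confirming that the space of maximal vectors is exactly the two-dimensional span found above. Hence $M_m$ and $N_m$ each possess a unique maximal vector and are irreducible by Lemma \ref{lm:l3}(2). I expect the bookkeeping in this final coefficient computation to be the only delicate step; everything else is formal.
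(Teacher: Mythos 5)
Your proof is correct and follows essentially the same route as the paper: use Lemma \ref{lm:l4} together with the remaining bracket conditions $x_\alpha\frac{\partial}{\partial x_\beta}(F_\beta)=F_\alpha$ to pin down the space of maximal vectors as the two-dimensional span of $x_1^m E_n \in N_m$ and $x_1^{m+1}\frac{\partial}{\partial x_n} \in M_m$, then invoke Lemma \ref{lm:l3} for irreducibility. Your explicit verification of the direct sum $W_n^{[m]} = M_m \oplus N_m$ via $f = \frac{1}{m+n}\Div D$ is a small addition that the paper leaves implicit (it deduces $M_m + N_m = W_n^{[m]}$ from the maximal-vector count and complete reducibility), but the core argument is identical.
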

\begin{proof}
	One can easily show (using Lemmas \ref{lm:l1} and \ref{lm:l2}) that $M_m$ and $N_m$ are $L$-submodules
	of the $L$-module $W_n^{[m]}$, $m \ge 0$.
	Recall that the subalgebra $L = W_n^{[0]}$ consists of linear derivations of the form
	$$D = \sum_{i,j=1}^n a_{ij}x_i \frac{\partial}{\partial x_j}, a_{ij} \in \K$$ and 
	$L$ is isomorphic to the general linear Lie algebra $\gl_n(\K)$.
	The subalgebra $M_0$ of $L$, consisting of divergence-free derivations is isomorphic to the simple
	subalgebra $\sl_n(\K) \subset \gl_n(\K)$ because $\Div D = {\rm tr} (a_{ij}) = 0$.
	Obviously, $W_n^{[0]} = L = M_0 \oplus N_0$, where $N_0 = \K E_n$.
	Since the Euler derivation $E_n$ acts scalarly on $W_n^{[m]}$ (see Lemma \ref{lm:l2}) 
	one  can consider $W_n^{[m]}$  (without loss of generality) as a module over $\sl_n(\K)$. 
	Choose the Borel subalgebra of $\sl_n(\K)$ consisting of all upper triangular matrices. 	
	By Lemma \ref{lm:l3}, it is sufficient to prove that the $\sl_n(\K)$-module $W_n^{[m]}$
	has exactly two linearly independent maximal vectors.
	By Lemma \ref{lm:l4}, any maximal vector of the $\sl_n(\K)$-module $W_n^{[m]}$, $m \ge 0$
	is of the form
	\begin{equation} \label{eq:e7}
		D = c^1 x_1^{m+1} \frac{\partial}{\partial x_1} +
		\sum_{i=2}^n \left(\sum_{k=0}^{m+1} c_k^i x_1^{m-k+1} x_i^k\right)\frac{\partial}{\partial x_i},
	\end{equation}
	for some $c^1, c_k^i \in \K$.
	
	Let us simplify the expression for $D$ using the equalities  
	$\left[x_{\alpha}\frac{\partial}{\partial x_{\beta}}, D\right] = 0$
	for all $1 \le \alpha < \beta \le n$.
	Recall that we assume that the Lie algebra $\sl_n(\K)$ is embedded in $W_n^{[0]}$ as a subalgebra consisting of all divergence-free derivations and  the Borel subalgebra of $\sl_n(\K)$ consists of all upper triangular matrices, i.e., of linear combinations (over $\mathbb K $) of derivations 
	$$ x_{\alpha}\frac{\partial}{\partial x_{\beta}},  \  \ x_{\alpha} \frac{\partial}{\partial x_{\alpha}} - x_n \frac{\partial}{\partial x_n}, 1\leq \alpha <\beta \leq n.$$
	As in the proof of Lemma \ref{lm:l4} we will use the equality \eqref{eq:e2}, namely
	$$
	\left[ x_{\alpha} \frac{\partial}{\partial x_{\beta}}, D\right] = 
	x_{\alpha} \left[\frac{\partial}{\partial x_{\beta}}, D\right] -
	D(x_{\alpha}) \frac{\partial}{\partial x_{\beta}}.
	$$
	Take  the derivation $D$ from \eqref{eq:e7}. Then  the product $\left[\frac{\partial}{\partial x_{\beta}}, D\right] $ is of the form
	$$
	\left[\frac{\partial}{\partial x_{\beta}}, D\right] = 
	\left(\sum_{k=0}^{m+1} k c_k^{\beta} \ x_1^{m-k+1} x_{\beta}^{k-1}\right)
	\frac{\partial}{\partial x_{\beta}}
	$$
	(recall that $\beta \ge 2$).
	Further,
	$$
	D(x_{\alpha}) = \delta_{1 \alpha} c^1 x_1^{m+1} + 
	\left(\sum_{k=0}^{m+1} c_k^{\alpha} x_1^{m-k+1} x_{\alpha}^k\right) (1 - \delta_{1 \alpha})
	$$
	(here, the multipliers $\delta_{1 \alpha}$ and $(1 - \delta_{1 \alpha})$ are included in the formula
	to account for the fact that $D$ acts differenly on $x_1$  than it does on $x_{\alpha}$, $\alpha > 1$).
	By the equality \eqref{eq:e2}, we have
	\begin{align*}
		&\left[ x_{\alpha} \frac{\partial}{\partial x_{\beta}}, D\right] =
		\left(\sum_{k=0}^{m+1} k c_k^{\beta} \ x_1^{m-k+1} x_{\beta}^{k-1} x_{\alpha} \right)
		\frac{\partial}{\partial x_{\beta}} \\-&
		\left(
		\delta_{1 \alpha} c^1 x_1^{m+1} + 
		(1 - \delta_{1 \alpha}) \sum_{k=0}^{m+1} c_k^{\alpha} x_1^{m-k+1} x_{\alpha}^k 
		\right) \frac{\partial}{\partial x_{\beta}} = 0
	\end{align*}
	for all $1 \le \alpha < \beta \le n$. 
	The latter equality implies
	\begin{equation} \label{eq:e8}
		\sum_{k=0}^{m+1} k c_k^{\beta} \ x_1^{m-k+1} x_{\beta}^{k-1} x_{\alpha} -
		\delta_{1 \alpha} c^1 x_1^{m+1} -
		(1 - \delta_{1 \alpha}) \sum_{k=0}^{m+1} c_k^{\alpha} x_1^{m-k+1} x_{\alpha}^k  = 0.
	\end{equation}
	Consider the monomials from the left side of \eqref{eq:e8} with $k \ge 2$
	 (recall that $\alpha < \beta$).
	The monomials from the first sum in \eqref{eq:e8} with $k \ge 2$ cannot be canceled with other	monomials,  so we have $c_k^{\beta} = 0$, $k \ge 2$. 
	Therefore the equality \eqref{eq:e8} takes the form
	\begin{equation} \label{eq:e9}
		c_1^{\beta} \ x_1^{m} x_{\alpha} -
		\delta_{1 \alpha} c^1 x_1^{m+1} -
		(1 - \delta_{1 \alpha}) \sum_{k=0}^{m+1} c_k^{\alpha} x_1^{m-k+1} x_{\alpha}^k  = 0,	 1 \le \alpha < \beta \le n.
	\end{equation}

	Further, consider the equalities \eqref{eq:e9} for $\alpha=1$, $\beta > \alpha$.
	One can easily see that $c_1^{\beta} = c^1$ for $2 \le \beta \le n$.
	Let now $2 \le \alpha \le n-1$ in \eqref{eq:e9}. Considering monomials
	with $k=0$ in  \eqref{eq:e9} yields the equalities
	$c_0^{\alpha} = 0$ for $2 \le \alpha \le n-1$. Thus,  we have in the equalities \eqref{eq:e9}: 
	
	$c_k^{\beta} = 0$, $2 \le k \le m+1$ for arbitrary $2 \le \beta \le n$; \ \ 
	 $c_0^{\alpha} = 0$	for $2 \le \alpha < n$;
	
	 $c_1^{\beta} = c^1$ for $2 \le \beta \le n$. The latter means that
	the derivation $D$ takes the form
	$$
	D = c^1 x_1^{m+1} \frac{\partial}{\partial x_1} + 
	\sum_{i=2}^n x_1^m x_i \frac{\partial}{\partial x_i} +
	c_0^n x_1^{m+1} \frac{\partial}{\partial x_n}
	.$$
	But then the derivation $D$ can be written in the form
	$D = c_1 x_1^m E_n + c_0^n x_1^{m+1} \frac{\partial}{\partial x_n}$.
	
	Let us check now that $D_{\lambda_1} = x_1^m E_n$ and
	$D_{\lambda_2} = x_1^{m+1} \frac{\partial}{\partial x_n}$ are maximal vectors
	for the Borel subalgebra of $\sl_n(\K)$ consisting of all upper triangular matrices.
	Indeed, 
	$$
	\left[x_{\alpha} \frac{\partial}{\partial x_{\beta}}, x_1^m E_n\right] =
	x_{\alpha} \frac{\partial}{\partial x_{\beta}}(x_1^m)E_n +
	x_1^m \left[x_{\alpha} \frac{\partial}{\partial x_{\beta}}, E_n\right] = 0,
	$$
	because $\left[x_{\alpha} \frac{\partial}{\partial x_{\beta}}, E_n\right] = 0$
	by Lemma \ref{lm:l2} and $\frac{\partial}{\partial x_{\beta}}(x_1^m) = 0$
	(recall that $\beta > 1$).
	Analogously one can check that 
	$\left[x_{\alpha} \frac{\partial}{\partial x_{\beta}}, x_1^{m+1} \frac{\partial}{\partial x_n}\right] = 0$
	(here we must take into account that $\alpha < n$).
	
	In addition, for all $1 \le \alpha < n$ we have
	\begin{align*}
		&\left[
		x_{\alpha} \frac{\partial}{\partial x_{\alpha}} - x_n \frac{\partial}{\partial x_n}, \
		x_1^m E_n
		\right] = \delta_{\alpha 1}m \ x_1^m E_n, \\
		&\left[
		x_{\alpha} \frac{\partial}{\partial x_{\alpha}} - x_n \frac{\partial}{\partial x_n}, \
		x_1^{m+1}\frac{\partial}{\partial x_n}
		\right] = (1 + \delta_{\alpha 1}(m+1)) \ x_1^{m+1} \frac{\partial}{\partial x_n}.
	\end{align*}
	The last two equalities (together with the two previous equalities) show that
	$D_{\lambda_1}$ and $D_{\lambda_2}$ are maximal vectors of the weights 
	$\lambda^{(1)}$ and $\lambda^{(2)}$ respectively, where
	\begin{align*}
		&\lambda^{(1)}(x_{\alpha} \frac{\partial}{\partial x_{\alpha}} - x_n \frac{\partial}{\partial x_n}) =
		m\delta_{\alpha 1}, \\
		&\lambda^{(2)}(x_{\alpha} \frac{\partial}{\partial x_{\alpha}} - x_n \frac{\partial}{\partial x_n}) =
		1 + (m+1)\delta_{\alpha 1}, \; 1 \le \alpha \le n-1.\\
	\end{align*}
	Note also that $D_{\lambda_1} \in N_m$ and $D_{\lambda_2} \in M_m$.
	The latter means (taking into account Lemma \ref{lm:l3}) that $M_m$ and $N_m$
	are irreducible submodules and $M_m + N_m = W_n^{[m]}$, $m \ge 0$.
	The proof is complete.
\end{proof}

\section{Some applications of Theorem 1}
\begin{proposition}\label{products}
Let $W_n = W_{n}^{[-1]}\oplus W_{n}^{[0]}\oplus\dots\oplus W_{n}^{[i]}\oplus\dots$ be the decomposition of $ W_n $ into  direct sum of homogeneous components of the standard grading on $W_{n}, n\geq 2.$ Let $W_{n}^{[i]}=M_{i}\oplus N_{i}$, $i\ge 0$, where $M_{i}$, $N_{i}$ are irreducible submodules of the $W_{n}^{[0]}$-module $W_{n}^{[i]}$ from Theorem \ref{Th1}. Then
	\begin{enumerate}
	\item $\dim W_n^{[i]}=n\binom{n+i}{i+1}, \  \dim N_i=\binom{n+i-1}{i}, \   \dim M_i=\frac{(n+i+1)(n+i-1)!}{(i+1)!(n-2)!};$
	\item If  $S, T$ are submodules of the $W_n^{[0]}$-module $W_n,$ then $[S,  T] $ is also a submodule of $W_n;$
		\item $[M_{i}, M_{j}] = M_{i+j}$ for all $i,j\ge 0$;
		\item $[N_{i}, N_{j}] = N_{i+j}$ for all $i,j\ge 0$, $i\ne j$; $[N_{i}, N_{i}] = 0$, for all $i\ge 0$;
		\item $[M_{i}, N_{j}] = W_{n}^{[i+j]}$ for all $i,j\ge 0$, except the cases:
		(a) $i=j=0$, where $[M_{0}, N_{0}] = 0$, 
		
		(b) $i= 0$, $j= 1,2,\dots$, where $[M_{0}, N_{j}] = N_{j}$,
		
		(c) $ i\geq 1 , j=0,  $ where $ [M_i, N_0]=M_i.$
			\item $[W_n^{[-1]}, N_{j}] = W_{n}^{[j-1]}$ for all $j\ge 0$
	\end{enumerate}
 \end{proposition}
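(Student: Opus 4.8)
The plan is to make item (2) the structural backbone, since it reduces every bracket identity in (3)--(6) to a projection count. For any $D \in W_n^{[0]}$ and a generator $[s,t]$ of $[S,T]$, the Jacobi identity gives $[D,[s,t]] = [[D,s],t] + [s,[D,t]] \in [S,T]$ because $[D,s]\in S$ and $[D,t]\in T$; as $[D,-]$ is linear, $[S,T]$ is closed under the $W_n^{[0]}$-action, hence a submodule. Combined with Lemma \ref{lm:l3}(3) this means that each $[A,B]\subseteq W_n^{[i+j]} = M_{i+j}\oplus N_{i+j}$, being a submodule whose summands are irreducible and non-isomorphic (witnessed by the distinct weights $\lambda^{(1)},\lambda^{(2)}$ found in the proof of Theorem \ref{Th1}), must coincide with one of $0,\,M_{i+j},\,N_{i+j},\,W_n^{[i+j]}$. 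The projection onto $N_{i+j}$ is detected by the divergence: by Lemma \ref{lm:l2}(3) a derivation $D'\in W_n^{[i+j]}$ has $N$-component $\frac{\Div D'}{i+j+n}E_n$, so a nonzero $N$-part is equivalent to $\Div[A,B]\neq 0$, which is computable via Lemma \ref{lm:l1}(3).

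For item (1) I would count directly. The space $W_n^{[i]}$ is $n$ copies of the degree-$(i+1)$ homogeneous forms, giving $\dim W_n^{[i]} = n\binom{n+i}{i+1}$; the map $fE_n \mapsto f$ identifies $N_i$ with degree-$i$ forms, so $\dim N_i = \binom{n+i-1}{i}$. Since $\Div(fE_n) = (i+n)f \neq 0$ by Lemma \ref{lm:l2}(3), the divergence maps $W_n^{[i]}$ onto the degree-$i$ forms with kernel exactly $M_i$, whence $\dim M_i = n\binom{n+i}{i+1} - \binom{n+i-1}{i}$; extracting the common factor $\frac{(n+i-1)!}{(i+1)!(n-1)!}$ and simplifying $n(n+i)-(i+1) = (n-1)(n+i+1)$ yields the stated closed form.

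For (3) and (4) the containments are immediate: Lemma \ref{lm:l1}(3) gives $\Div[D_1,D_2]=0$ whenever $D_1,D_2$ are divergence-free, so $[M_i,M_j]\subseteq M_{i+j}$; and a Leibniz computation gives $[fE_n,gE_n]=(j-i)\,fg\,E_n$, so $[N_i,N_j]\subseteq N_{i+j}$ and $[N_i,N_i]=0$. By irreducibility it then suffices to exhibit one nonzero bracket: $[x_1^{i+1}\partial/\partial x_2,\ x_2^{j+1}\partial/\partial x_1]\neq 0$ settles (3), and $(j-i)x_1^{i+j}E_n\neq 0$ for $i\neq j$ settles (4).

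Items (5) and (6) carry the real work. The key identity is $[D,fE_n]=D(f)E_n-if\,D$ (using $[E_n,D]=iD$ from Lemma \ref{lm:l2}), from which the degenerate cases fall out: for $i=j=0$ one gets $0$ since $[\sl_n,E_n]=0$; for $i=0$ the bracket is $D(f)E_n\in N_j$ and $\sl_n$ acts nontrivially on the nontrivial irreducible $N_j$, giving $N_j$; for $j=0$ the bracket is $-iD$, giving $M_i$ when $i\geq 1$. In the remaining range $i,j\geq 1$ I would test two choices with $D=x_1^{i+1}\partial/\partial x_2\in M_i$: taking $f=x_1^{j-1}x_2$ makes $\Div[D,fE_n]=(j+n)D(f)\neq 0$ (nonzero $N$-part), while $f=x_1^{j}$ gives $[D,fE_n]=-i\,x_1^{i+j+1}\partial/\partial x_2$, a nonzero divergence-free element (nonzero $M$-part); both projections nonzero forces $[M_i,N_j]=W_n^{[i+j]}$. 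Item (6) is identical in spirit via $[\partial/\partial x_k,fE_n]=\frac{\partial f}{\partial x_k}E_n+f\,\partial/\partial x_k$: for $j=0$ one gets the irreducible standard module $W_n^{[-1]}$, and for $j\geq 1$ the choices $f=x_1^j,\,k=1$ and $f=x_1^j,\,k=2$ produce nonzero $N$- and $M$-components respectively. The main obstacle is purely bookkeeping: correctly isolating the $M$- and $N$-projections of each test bracket (the divergence pins down the $N$-part cleanly, but one must verify the residual divergence-free piece is genuinely nonzero) and confirming $M_{i+j}\not\cong N_{i+j}$ so that Lemma \ref{lm:l3}(3) applies.
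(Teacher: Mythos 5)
Your proposal is correct and follows essentially the same route as the paper: establish that each bracket $[A,B]$ is a $W_n^{[0]}$-submodule of the irreducible-plus-irreducible decomposition $W_n^{[i+j]}=M_{i+j}\oplus N_{i+j}$, then invoke Lemma \ref{lm:l3}(3) together with explicit test brackets, detecting the $N$-component via the divergence. The only differences are cosmetic: you work out the dimension count in (1) and the Jacobi argument in (2) explicitly (the paper calls these immediate), and in (5)--(6) you use two test elements with nonzero $M$- and $N$-projections where the paper uses a single element lying in neither summand.
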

\begin{proof}
	
	\begin{enumerate}
			\item Follows immediately.
				\item  Obvious.
\item 
		 To prove this part of the lemma it is enough to show (in  view of Theorem 1 and part 2 of this lemma)  that $[M_{i}, M_{j}]\ne 0$ for $i,j\ge 0$. Take the elements $x_{1}^{i+1}\frac{\partial}{\partial x_{2}}\in M_{i}$, $x_{2}^{j+1}\frac{\partial}{\partial x_{1}}\in M_{j}$. Then 
		$$[x_{1}^{i+1}\frac{\partial}{\partial x_{2}}, x_{2}^{j+1}\frac{\partial}{\partial x_{1}}]= (j+1)x_{1}^{i+1}x_{2}^{j}\frac{\partial}{\partial x_{1}}-(i+1)x_{1}^{i}x_{2}^{j+1}\frac{\partial}{\partial x_{2}}\ne 0.$$
		So, $[M_{i}, M_{j}]\ne 0$ for $i,j\ge 0$ and therefore $[M_{i}, M_{j}]= M_{i+j}$.
\item 
		As in the proof of the part 3 of the lemma it is enough to show that $[N_{i}, N_{j}]\ne 0$ for $i\ne j$, $i,j\ge 0$ (obviously $[N_i, N_j]\subseteq N_{i+j}$). Take the elements $x_{1}^{i}E_{n}\in N_{i}$, $x_{2}^{j}E_{n}\in N_{j}$. Then
		$$[x_{1}^{i}E_{n}, x_{2}^{j}E_{n}]= (j x_{1}^{i}x_{2}^{j}-i x_{1}^{i}x_{2}^{j})E_{n}\ne 0$$
		for $i,j\ge 0$, $i\ne j$.
		So, $[N_{i}, N_{j}]= N_{i+j}$ for all $i,j\ge 0$, $i\ne j$.
		 The equality $[N_{i}, N_{i}]= 0$ for $i\ge 0$ can be immediately checked.
		
\item 
		If $i= 0$, then $[M_{0}, N_{j}]= N_{j}$ for $j\ge 1$ because $N_{j}$ is an irreducible $W_{n}^{[0]}$-module. Of course, $[M_{0}, N_{0}]= 0$ since $N_{0}= E_{n}\K$.
		
		Let $i\ge 1$, $j\ge 1$. Take the elements $x_{2}^{i+1}\frac{\partial}{\partial x_{1}}\in M_{i}$ and $x_{1}^{j}E_{n}\in N_{j}$. Let us show that the product $T=[x_{2}^{i+1}\frac{\partial}{\partial x_{1}}, x_{1}^{j}E_{n}]$ lies neither in $M_{i+j}$ nor in $N_{i+j}$. Indeed, this product has the form $T=j x_{1}^{j-1}x_{2}^{i+1}E_{n}-i x_{1}^{j}x_{2}^{i+1}\frac{\partial}{\partial x_{1}}$. Since $i\ge 1$, $j\ge 1$ $T$ has nonzero divergence ($\Div (T=j(n+i+j-1)x_1^{j-1}x_2^{i+1}) $) and is  $T$  not proportional to the Euler derivation $E_{n}$. Thus, $T\notin M_{i+1}$ and $T\notin N_{i+1}$. The latter means that the $W_{n}^{[0]}$-submodule $[M_{i}, N_{j}]$ is nonzero and equals neither  $M_{i+j}$ nor  $N_{i+j}$. By Lemma \ref{lm:l3} we have $[M_{i}, N_{j}]= W_{n}^{[i+j]}$. Let us look at the remaining case $[M_i, N_0]. $ Since $N_0=E_n\mathbb K ,$ where $ E_n$ is the Euler derivation which acts scalarly on $M_i$ (in view of Lemma \ref{lm:l2}) we see that $ [M_i, N_0]=M_i.$
		\item It is obvious that $[W_n^{[-1]}, N_{j}]\subset W_n^{[j-1]}.$ Therefore it is sufficient to show that the $W_n^{[0]}$-module $[W_n^{[-1]}, N_{j}]$ is nonzero and does not coincide with neither $M_{j-1}$ nor $N_{j-1}.$ Take a nonzero element $fE_n\in N_j.$ Then $[\frac{\partial}{\partial x_k}, fE_n]=\frac{\partial f}{\partial x_k}E_n+f\frac{\partial}{\partial x_k}. $ The latter equality implies that $[W_n^{[-1]}, N_{0}]=W_n^{[-1]}.$ Let $j>0$. Then $\deg f\geq 1$ and at least one partial derivative $\frac{\partial f}{\partial x_k} $ is nonzero. As the product $ f\frac{\partial}{\partial x_k}$ does not belong to $N_{j-1}$, we see that $ [W_n^{[-1]}, N_{j}]=W_n^{[j-1]}$ by Lemma \ref{lm:l3}.
		
	\end{enumerate}
	
	The proof is complete.
\end{proof}

\begin{corollary}
	Let  $W^{[i]}_n$ and $W^{[j]}_n$ be homogeneous components  of the standard grading $W_n = W_{n}(\K)= \underset{i\ge-1}{\bigoplus}W_{n}^{[i]}, n\geq 2.$ Then  the next equalities are satisfied: 
	$$[W^{[i]}_n, W^{[j]}_n]= W^{[i+j]}_n, i, j\geq 0,$$ except the case when  $i=j=0$, where $[W_{n}^{[0]}, W_{n}^{[0]}]=M_{0}\simeq \sl_{n}(\K )$. Obviously $[W_n^{[-1]}, W_n^{[-1]}]=0$ since $[\frac{\partial}{\partial x_i}, \frac{\partial}{\partial x_j}]=0, i,j=1,\ldots ,n. $
\end{corollary}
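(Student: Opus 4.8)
The plan is to deduce the Corollary directly from Proposition \ref{products}, treating each homogeneous component through the decomposition $W_n^{[k]}=M_k\oplus N_k$ of Theorem \ref{Th1} and then reassembling the resulting brackets. Concretely, for $i,j\ge 0$ bilinearity of the Lie bracket gives
$$[W_n^{[i]},W_n^{[j]}]=[M_i,M_j]+[M_i,N_j]+[N_i,M_j]+[N_i,N_j],$$
and each of the four summands on the right is already computed in parts (3)--(5) of Proposition \ref{products}. The only manipulation needed beyond reading off those parts is the antisymmetry $[N_i,M_j]=-[M_j,N_i]$, which lets me evaluate the third summand via part (5) as well. Thus the whole Corollary reduces to a finite case analysis assembling the tabulated products.

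First I would dispose of the generic case $i\ge 1$, $j\ge 1$. Here part (5) already yields $[M_i,N_j]=W_n^{[i+j]}$, so this single summand exhausts the entire component and $[W_n^{[i]},W_n^{[j]}]=W_n^{[i+j]}$, the other three summands merely confirming the inclusion. Next, for $i=0$, $j\ge 1$ (the case $i\ge 1$, $j=0$ being symmetric by antisymmetry of the bracket) I would evaluate the four terms individually: part (3) gives $[M_0,M_j]=M_j$, subcase (b) of part (5) gives $[M_0,N_j]=N_j$, subcase (c) of part (5) together with antisymmetry gives $[N_0,M_j]=M_j$, and part (4) gives $[N_0,N_j]=N_j$ because $0\ne j$. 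Their sum is $M_j\oplus N_j=W_n^{[j]}=W_n^{[i+j]}$, as claimed.

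The exceptional case $i=j=0$ is exactly where the pattern breaks: subcase (a) of part (5) gives $[M_0,N_0]=0$, part (4) gives $[N_0,N_0]=0$, while part (3) gives $[M_0,M_0]=M_0$. Hence the sum collapses to $[W_n^{[0]},W_n^{[0]}]=M_0\simeq\sl_n(\K)$, which is strictly smaller than $W_n^{[0]}\simeq\gl_n(\K)$; this is consistent because $\sl_n(\K)$ is simple, and therefore perfect. Finally, the equality $[W_n^{[-1]},W_n^{[-1]}]=0$ is immediate from $[\frac{\partial}{\partial x_i},\frac{\partial}{\partial x_j}]=0$. I do not anticipate any genuine obstacle: the argument is a bookkeeping assembly of Proposition \ref{products}, and the one point deserving a little care is the systematic use of antisymmetry to fold each term $[N_i,M_j]$ back into the products of part (5) that are stated only in the order $[M,N]$.
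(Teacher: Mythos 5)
Your proof is correct and takes essentially the same route as the paper, which likewise deduces the corollary from Proposition \ref{products} (combined with Lemma \ref{lm:l3}); your contribution is merely to spell out the case-by-case bookkeeping that the paper compresses into one sentence. No gaps.
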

\begin{proof}
	In view of Lemma \ref{lm:l3} and Proposition \ref{products} we need only to show that 
	$[W_{n}^{[-1]}, W_{n}^{[m]}]\not = 0$, $m\ge 0$. But this inequality can be checked immediately. 
\end{proof}

\begin{proposition}
Let $W_n^{[i]}=M_i+N_i$  be the decomposition of the $\sl_n(\mathbb K)$-module $ W_n^{[i]}$  into direct sum of irreducible submodules (as in Theorem \ref{Th1}). The modules $M_i, M_j, i\not = j $ are non-isomorphic, the same is true for modules $N_i, N_j, i\not = j.$  The modules $M_i, N_j  $ are non-isomorphic except the case of submodules $M_i, N_{i+2} $ of the $\sl_2(\mathbb K)$-module $ W_2$ when $M_i\simeq N_{i+2}. $ These $\sl_2(\mathbb K)$-modules are also  $\gl_2(\mathbb K)$-modules, but they are non-isomorphic as $\gl_2(\mathbb K)$-submodules of $W_2. $
\end{proposition}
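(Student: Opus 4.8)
The plan is to read off the highest weights of the irreducible modules $M_m$ and $N_m$ directly from the computation already performed in the proof of Theorem \ref{Th1}, and then to invoke the standard classification of finite-dimensional irreducible $\sl_n(\K)$-modules by highest weight (see \cite{Humphreys}): two such modules are isomorphic if and only if their highest weights agree. Recall that in that proof we fixed the Cartan subalgebra of $\sl_n(\K)$ with basis $h_\alpha = x_\alpha\frac{\partial}{\partial x_\alpha} - x_n\frac{\partial}{\partial x_n}$, $1 \le \alpha \le n-1$, and produced the highest weight vectors $x_1^m E_n$ for $N_m$ and $x_1^{m+1}\frac{\partial}{\partial x_n}$ for $M_m$. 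Writing a weight $\lambda$ as the tuple $(\lambda(h_1), \dots, \lambda(h_{n-1}))$, the displayed bracket relations in the proof of Theorem \ref{Th1} give
\begin{align*}
N_m &:\quad (m,\, 0,\, \dots,\, 0), \\
M_m &:\quad (m+2,\, 1,\, \dots,\, 1).
\end{align*}

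First I would compare weights within each family. An isomorphism $N_i \simeq N_j$ forces equality in the first coordinate, hence $i = j$, and likewise $M_i \simeq M_j$ forces $i = j$; this settles the two intra-family assertions. Next, $M_i \simeq N_j$ would require $(i+2, 1, \dots, 1) = (j, 0, \dots, 0)$ as $(n-1)$-tuples. When $n \ge 3$ the tuples have length at least two, and the second coordinate yields the impossible equation $1 = 0$, so no such isomorphism can exist. When $n = 2$ the Cartan is one-dimensional and the condition collapses to the single equation $i + 2 = j$, which produces precisely the claimed coincidence $M_i \simeq N_{i+2}$ of $\sl_2(\K)$-modules.

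It then remains to show that, for $n = 2$, these isomorphic $\sl_2(\K)$-modules $M_i$ and $N_{i+2}$ fail to be isomorphic as $\gl_2(\K)$-modules. Here the essential point is that $E_2 = E_n$ corresponds to the identity matrix and therefore spans the center of $\gl_2(\K)$, while by Lemma \ref{lm:l2}(2) the adjoint action of $E_n$ on every element of $W_n^{[m]}$ is multiplication by $m$. Since $M_i \subseteq W_2^{[i]}$ and $N_{i+2} \subseteq W_2^{[i+2]}$, the central element $E_2$ acts as the scalar $i$ on $M_i$ and as the scalar $i+2$ on $N_{i+2}$. Any $\gl_2(\K)$-module isomorphism must intertwine this central action, which is impossible for the distinct scalars $i$ and $i+2$; hence $M_i$ and $N_{i+2}$ are non-isomorphic as $\gl_2(\K)$-modules.

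Once the highest weights are recorded, the argument is essentially bookkeeping, so the main point to get right is the case split $n \ge 3$ versus $n = 2$: the ``extra'' Cartan coordinates, carrying the value $1$ for the $M$-family and $0$ for the $N$-family, are exactly what obstructs any coincidence when $n \ge 3$, whereas they are absent when $n = 2$. The only genuinely separate idea is the observation that the central Euler derivation distinguishes the two modules at the $\gl_2(\K)$ level, which supplies the final non-isomorphism.
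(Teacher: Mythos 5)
Your proof is correct and follows essentially the same route as the paper: both read off the highest weights $(m,0,\dots,0)$ for $N_m$ and $(m+2,1,\dots,1)$ for $M_m$ from the bracket computations in the proof of Theorem \ref{Th1} and then apply the classification of irreducible finite-dimensional $\sl_n(\K)$-modules by highest weight, with the same case split $n\ge 3$ (the coordinate $1=0$ obstruction) versus $n=2$ (giving $M_i\simeq N_{i+2}$). The only differences are minor improvements on your side: you distinguish $M_i\not\simeq M_j$ and $N_i\not\simeq N_j$ by weights where the paper uses dimensions, and you actually prove the final $\gl_2(\K)$ non-isomorphism via the central element $E_2$ acting by the distinct scalars $i$ and $i+2$, a claim the paper states but leaves without an explicit argument.
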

\begin{proof}
The modules $M_i$ and $M_j, \ i\not = j$	are non-isomorphic because of different dimensions; the same is true for $N_i$ and $N_j, \  i\not = j.$ Consider the modules $M_i$ and $N_j, \ 1\leq i, j\leq n .$ 	
	It follows from the general theory of modules over a semisimple Lie algebra $\g $ that irreducible finite dimensional $\g$-modules are uniquely defined (up to isomorphism) by their maximal weights (see, for example, \cite{Humphreys}, Ch.VI). Thus, the modules $M_k$  and $ N_m  $  are isomorphic if and only if they have equal weights.  We have from the proof of Theorem \ref{Th1} that the module $N_m $ has the weight 
	$$\lambda ^{(1)}(x_i\frac{\partial}{\partial x_i} - x_n\frac{\partial}{\partial x_n})=m\delta_{i1}, i=1,\ldots , n-1$$ and the module  $M_k$  has the weight 
	$$\lambda ^{(2)}(x_i\frac{\partial}{\partial x_i} - x_n\frac{\partial}{\partial x_n})=1+(k+1)\delta _{i1}, i=1,\ldots , n-1.$$
	So, $M_k\simeq N_m $ if and only if $m\delta _{i1}=1+(k+1)\delta _{i1}, i=1, \ldots , n-1. $ If $n>2$, then  for $  i=2$ we have $0=m\delta _{12}=1+(k+1)\delta _{12}=1, $  which is impossible. Therefore $n=2, m=k+2.$ One can easily check that the irreducible  $sl_2(\mathbb K)$-modules  $M_k $ and $N_{k+2} $ have the same dimension (over $\mathbb K$), so they are isomorphic.
	The proof is complete.
\end{proof}
The subalgebra $W_{n}^{[0]}$ of the Lie algebra $W_{n}(\K)$ is contained in the subalgebra $W_{n}^{[-1]}\oplus W_{n}^{[0]}$ which is isomorphic to the linear affine Lie algebra $\aff_{n}(\K)$. This Lie algebra is the semidirect sum of the Lie algebra $\gl (V)$ and the abelian Lie algebra $V$, $\dim_{\K} V= n$, with the natural action of $\gl (V)$ on $V$. Note that the Lie algebra $W_{n}^{[-1]}\oplus W_{n}^{[0]}$ is not maximal in $W_{n}(\K)$ because it is contained in the Lie subalgebra of $W_{n}(\K)$ consisting of all the derivations with constant divergence.

Nevertheless,  the subalgebra $W_{n}^{[-1]}\oplus W_{n}^{[0]}$ has some properties similar to those of maximal subalgebras of $W_{n}(\K)$.

\begin{theorem}\label{Th2}
	Let $L=W_{n}^{[-1]}\oplus W_{n}^{[0]}$ be the subalgebra of $W_n,  \ n\geq 2$ consisting of all the derivations $D= \sum_{i=1}^{n}f_{i}\frac{\partial}{\partial x_{i}}$ with $\deg f_{i}\le 1$, $i= 1,\dots , n$. An element $D= D_{-1}+D_{0}+D_{1}+\cdots +D_{k}, D_i\in W_n^{[i]}$, $k\ge 1$ with  $D_{k}\ne 0$,  generates together with $L$ the entire Lie algebra $W_{n}$ if and only if $\Div D\ne {\rm const}$ and either $k\ge 2$ or $k=1$ and $D_{1}$ is not proportional to the Euler derivation $E_{n}$.
\end{theorem}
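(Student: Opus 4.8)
Write $S=\langle L,D\rangle$ for the subalgebra generated by $L$ and $D$. The whole argument rests on one \emph{bootstrap} observation, used in both directions: once $S$ contains $W_n^{[-1]}$, $W_n^{[0]}$ and \emph{one} full homogeneous component $W_n^{[m]}$ with $m\ge 1$, then $S=W_n$. Indeed, by the Corollary, descending via $[W_n^{[-1]},W_n^{[p]}]=W_n^{[p-1]}$ reaches $W_n^{[1]}$, and ascending via $[W_n^{[1]},W_n^{[p]}]=W_n^{[p+1]}$ ($p\ge 1$) then produces every $W_n^{[p]}$, $p\ge 1$; together with $W_n^{[-1]}\oplus W_n^{[0]}=L$ this exhausts $W_n$. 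I would also record an \emph{extraction step}, valid whenever $k\ge 1$: since $E_n\in W_n^{[0]}\subseteq S$ and $\operatorname{ad}E_n$ acts on $W_n^{[i]}$ as the scalar $i$ (Lemma \ref{lm:l2}), the grading is the eigenspace decomposition of the diagonalizable operator $\operatorname{ad}E_n$. The spectral projection onto $W_n^{[i]}$ is a polynomial in $\operatorname{ad}E_n$, hence preserves the invariant subspace $S$; applying it to $D=\sum_{i=-1}^{k}D_i$ shows that every homogeneous component $D_i$ lies in $S$.

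For necessity I would exhibit a proper subalgebra trapping $S$ in each forbidden case. If $\Div D={\rm const}$, then $D$ and every element of $L$ lie in the set $\mathcal C$ of derivations of constant divergence, which is a subalgebra by Lemma \ref{lm:l1}(3) (a derivation annihilates constants); as $x_1^2\frac{\partial}{\partial x_1}\notin\mathcal C$, we get $S\subseteq\mathcal C\subsetneq W_n$. If $k=1$ and $D_1\in N_1$ (the meaning I assign to ``proportional to $E_n$''), then $D\in W_n^{[-1]}\oplus W_n^{[0]}\oplus N_1$, and using $[W_n^{[-1]},W_n^{[0]}]=W_n^{[-1]}$, $[W_n^{[-1]},N_1]=W_n^{[0]}$ and $[W_n^{[0]},N_1]=N_1$ together with $[N_1,N_1]=0$ (Proposition \ref{products}, parts 6 and 4) one checks directly that $W_n^{[-1]}\oplus W_n^{[0]}\oplus N_1$ is a subalgebra; it omits $M_1$ (e.g.\ $x_1^2\frac{\partial}{\partial x_2}$), so again $S\ne W_n$.

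For sufficiency I would assume $\Div D\ne{\rm const}$ together with the stated $k$-condition and produce a full component $W_n^{[m]}$, $m\ge 1$, inside $S$ so as to invoke the bootstrap. By the extraction step each $D_i\in S$, so $S$ contains the $W_n^{[0]}$-submodule generated by $D_i$, which by Theorem \ref{Th1} and Lemma \ref{lm:l3} is one of $0$, $M_i$, $N_i$, $W_n^{[i]}$. If some $D_i$ with $i\ge 1$ has nonzero components in both $M_i$ and $N_i$, that submodule is the full $W_n^{[i]}$ and we are done. Otherwise every $D_i$ lies in $M_i$ or in $N_i$; since the non-constant part $\sum_{i\ge 1}\Div D_i$ of $\Div D$ is nonzero, there is some $0\ne D_j\in N_j$ with $j\ge 1$. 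If $j\ge 2$, then $[W_n^{[-1]},N_j]=W_n^{[j-1]}$ (Proposition \ref{products}(6)) is a full component at level $j-1\ge 1$, and the bootstrap finishes.

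The remaining case $j=1$ is the main obstacle and the only place where both hypotheses are used at once. Here $0\ne D_1\in N_1$, so the excluded configuration ``$k=1$ with $D_1$ proportional to $E_n$'' would apply unless $k\ge 2$; thus the $k$-condition forces $k\ge 2$. The top component $D_k\ne 0$ must then lie in $M_k$ (if instead $D_k\in N_k$ we would be in the case $j=k\ge 2$ already treated), and combining it with $N_1$ through the nondegenerate bracket $[M_k,N_1]=W_n^{[k+1]}$ (Proposition \ref{products}(5), whose exceptional cases all require $i=0$ or $j=0$) yields a full component at level $k+1\ge 3$. Applying the bootstrap completes the proof. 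The delicate point throughout is exactly this step: a lone divergence-carrying piece sitting in $N_1$ cannot by itself escape the proper subalgebra $W_n^{[-1]}\oplus W_n^{[0]}\oplus N_1$, and only the presence of a genuinely higher-degree term (via $k\ge 2$) lets the bracket $[M_k,N_1]$ recover a full homogeneous component.
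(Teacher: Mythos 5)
Your proof is correct and follows essentially the same route as the paper's: homogeneous components of $D$ are extracted inside $S$ using $\operatorname{ad}E_n$, the submodules $S\cap W_n^{[i]}$ are classified via Theorem \ref{Th1} and Lemma \ref{lm:l3}, full components are produced with Proposition \ref{products} (parts 5 and 6) and propagated up and down the grading, and necessity is handled by the constant-divergence subalgebra and the proper subalgebra $W_n^{[-1]}\oplus W_n^{[0]}\oplus N_1$. The only differences are organizational: the paper's case analysis pivots on the top component $U_k$ while yours pivots on the level carrying nonconstant divergence, and you spell out details (the spectral-projection form of the extraction step, the verification that $L\oplus N_1$ is closed under brackets) that the paper states more tersely.
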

\begin{proof}
	Let a derivation $D\in W_{n},$
	$$D= D_{-1}+D_{0}+D_{1}+\cdots +D_{k}, \   k\ge 1, \ D_{k}\ne 0,$$ satisfy all the conditions of the theorem. Denote by the $U=\langle L, D\rangle$ the subalgebra of $W_{n}$ generated by $L$ and $D$. Let us show that $U= W_{n}$. Since the Euler derivation $E_{n}$ belongs to $L$ the subalgebra $U$ is graded: $U= \underset{i\ge-1}{\bigoplus}U_{i}$, where $U_{i}= U\cap  W_{n}^{[i]}$.	
		Indeed, for any element 
	$$S\in U, S= S_{-1}+S_{0}+S_{1}+\cdots +S_{m}, S_{i}\in W_{n}^{[i]}$$ it holds (by Lemma \ref{lm:l2}) the equality  
	$$[E_{n}, S]= -S_{-1}+0 S_{0}+S_{1}+\cdots +m S_{m}.$$ By induction on $m$ one can prove that $S_{i}\in U$, $i= -1,0,1,\dots, m$, so $U_{i}=W_{n}^{[i]}\cap  U$. Note that the $\K$-subspace $U_{i}$, $i\ge -1$, is a submodule of the $W_{n}^{[0]}$-module $W_{n}^{[i]}$ because $W_{n}^{[0]}\subset U$. Therefore, for any $i\ge 0,$ the submodule $U_{i}$ equals either $W_{n}^{[i]}$ or $M_{i}$, or $N_{i}$ by Lemma \ref{lm:l3}. Since $D_{k}\ne 0$ in the expression $D= D_{-1}+D_{0}+D_{1}+\cdots +D_{k}$, the submodule $U_{k}$ is nonzero.
	
	First, let $k=1$, i.e. $D= D_{-1}+D_{0}+D_{1}$. Since $D_{1}$ is not proportional to $E_{n}$ we have $U_{1}\ne N_{1}$.
	The condition $\Div D_{1}\ne const$ implies that $U_{1}\ne M_{1}$. Therefore $U_{1}= W_{n}^{[1]}.$ But then $W_{n}^{[2]}=[W_{n}^{[1]}, W_{n}^{[1]}]\subseteq U, \ldots , W_n^{[i]}\subseteq U,i\geq 2.  $  The latter means that  $U= W_{n}(\K).$
		
	Now,  let $k\ge 2$. If $U_{k}= W_{n}^{[k]}$ then taking into account the inclusion $W_{n}^{[-1]}\subset U,$ we see that $W_{n}^{[k-1]}\subset U,\dots , W_{n}^{[1]}\subset U$. But then $W_{n}^{[i]}\subset U$ for $i\ge 1$ and $U= W_{n}(\K)$. Further, if $U_{k}= N_{k}$ then $[W_{n}^{[-1]}, N_{k}]= W_{n}^{[k-1]}$ by Proposition \ref{products}. One can show in similar way that $U= W_{n}(\K)$. Let now $U_{k}= M_{k}$. Since $\Div D\ne const$ we have $\Div D_{s}\ne const$ for some $1\le s \le k$. But then $U_{s}\ne 0$, $U_{s}\ne M_{s}$. If $U_{s}= W_{n}^{[s]}$ then we can show as above that $U= W_{n}(\K)$. Let $U_{s}= N_{s}$. Then, by Proposition \ref{products},  $[U_{k}, U_{s}]= [M_{k}, N_{s}]= W_{n}^{[k+s]}$. Repeating the previous considerations  one can show that $U= W_{n}$.
		
	Now, let $L$ and $D$ generate the Lie algebra $W_{n}$. By Lemma \ref{lm:l1}, all the derivations with constant divergence form a proper subalgebra $W_c$ of $W_{n}(\K)$. Since $L\subseteq W_c$ we see that $\Div D\ne const$. Write the derivation $D$ in the form $$D= D_{-1}+D_{0}+D_{1}+\cdots +D_{k}, \ D_{i}\in W_{n}^{[i]}.$$ Since $\Div D\ne const$ we see that $k\ge 1$. If $k=1$ and $D_{1}= f E_{n}$ for some $f\in \K[x_{1},\dots , x_{n}]$ then $D_{1}\in N_{1}$. But $L+N_{1}$ is a proper subalgebra of $W_{n}(\K)$. The latter contradicts our assumption on $D$. Therefore $D_{1}$ is non-proportional to the Euler derivation $E_{n}$. 
		The proof is complete.
\end{proof}

Using Theorem 2 one can easily prove the next statement (see \cite{Bavula}, Proposition 2.21).

\begin{corollary}
	The set $W_c$ of all derivations from the Lie algebra $ W_{n}, n\geq 2$ with constant divergence is a maximal subalgebra  of the Lie algebra $ W_{n}(\mathbb K)$. 
\end{corollary}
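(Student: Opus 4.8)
The plan is to realize the hint ``using Theorem~2'' by recording a few easy structural facts and then reducing maximality to Theorem~\ref{Th2}. First I would check that $W_c$ is a proper subalgebra: by Lemma~\ref{lm:l1}(3), if $\Div D_1$ and $\Div D_2$ are both constant then $\Div[D_1,D_2]=D_1(\Div D_2)-D_2(\Div D_1)=0$, so $W_c$ is closed under the bracket, and it is proper since, e.g., $x_1^2\frac{\partial}{\partial x_1}$ has divergence $2x_1\notin\K$. I would then isolate the two inclusions that drive the argument: the subalgebra $L=W_n^{[-1]}\oplus W_n^{[0]}$ lies in $W_c$ (its elements have divergence $0$ or the trace, hence constant), and every $M_i$ with $i\ge 1$ lies in $W_c$ because it consists of divergence-free derivations. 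Consequently any subalgebra $V$ with $W_c\subsetneq V$ already contains $L$, so it suffices to exhibit a single $D'\in V$ with $\langle L,D'\rangle=W_n$; then $V\supseteq\langle L,D'\rangle=W_n$ forces $V=W_n$.

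Next I would pick $D\in V\setminus W_c$, so that $\Div D\neq\mathrm{const}$, and write its homogeneous decomposition $D=D_{-1}+D_0+\dots+D_k$ with $D_k\neq 0$. Exactly as in the proof of Theorem~\ref{Th2}, the condition $\Div D\neq\mathrm{const}$ forces $k\ge 1$ (a derivation $D_{-1}+D_0$ always has constant divergence). If $k\ge 2$, or if $k=1$ and $D_1\notin N_1$, then $D$ itself satisfies the hypotheses of Theorem~\ref{Th2}, whence $\langle L,D\rangle=W_n$ and we finish by taking $D'=D$.

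The only remaining case---and the one genuine obstacle---is $k=1$ with $D_1\in N_1$, i.e. $D=D_{-1}+D_0+fE_n$ with $f$ a nonzero linear form. This is precisely the exceptional situation of Theorem~\ref{Th2}, in which $\langle L,D\rangle\neq W_n$, so here one must use that $V$ contains the whole of $W_c$ and not merely $L$. The idea is to correct the top component: choose any nonzero $m\in M_1$ (for instance $m=x_1^2\frac{\partial}{\partial x_2}$, which exists since $n\ge 2$), observe that $m\in M_1\subseteq W_c\subseteq V$, and set $D'=D+m\in V$. Its top homogeneous part becomes $D_1+m=fE_n+m$, which lies outside $N_1$ since $M_1\cap N_1=0$ and $m\neq 0$, while $\Div D'=\Div D$ (because $\Div m=0$) is still non-constant. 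Thus $D'$ meets the hypotheses of Theorem~\ref{Th2}, giving $\langle L,D'\rangle=W_n$ and hence $V=W_n$, which proves that $W_c$ is maximal. I expect the bookkeeping to be entirely routine; the only subtle point is recognizing that the exceptional case of Theorem~\ref{Th2} is resolved exactly by the divergence-free elements (the $M_1$-part) that $W_c$ supplies over $L$.
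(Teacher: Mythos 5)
Your proof is correct and takes exactly the approach the paper intends: the paper gives no written proof of this corollary, saying only that it follows easily from Theorem~\ref{Th2}, and your argument is the natural realization of that hint (pass from a subalgebra $V\supsetneq W_c$ to some $D\in V\setminus W_c$ and apply Theorem~\ref{Th2}). The one genuinely non-trivial detail --- repairing the exceptional case $k=1$, $D_1\in N_1$ by adding a nonzero element of $M_1\subseteq W_c\subseteq V$ so that Theorem~\ref{Th2} applies to the corrected derivation --- is handled correctly, and it is precisely the point that the paper's ``one can easily prove'' glosses over.
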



%
\end{document}